\documentclass[12pt]{article}
\usepackage{times}
\usepackage{amsfonts, amsmath, amssymb, amsthm, mathtools, dsfont}
\usepackage[colorlinks=true,citecolor=blue]{hyperref}
\usepackage[margin=2.5cm]{geometry}
\usepackage[utopia]{mathdesign}
\usepackage[mathscr]{euscript}
\usepackage[utf8]{inputenc}
\usepackage{tikz}
\usetikzlibrary{arrows}
\usetikzlibrary{decorations.pathreplacing,decorations.markings}

\tikzset{
  % style to apply some styles to each segment of a path
  on each segment/.style={
    decorate,
    decoration={
      show path construction,
      moveto code={},
      lineto code={
        \path [#1]
        (\tikzinputsegmentfirst) -- (\tikzinputsegmentlast);
      },
      curveto code={
        \path [#1] (\tikzinputsegmentfirst)
        .. controls
        (\tikzinputsegmentsupporta) and (\tikzinputsegmentsupportb)
        ..
        (\tikzinputsegmentlast);
      },
      closepath code={
        \path [#1]
        (\tikzinputsegmentfirst) -- (\tikzinputsegmentlast);
      },
    },
  },
  % style to add an arrow in the middle of a path
  mid arrow/.style={postaction={decorate,decoration={
        markings,
        mark=at position .5 with {\arrow[#1]{stealth}}
      }}},
}

\setcounter{topnumber}{99}
\setcounter{bottomnumber}{99}
\setcounter{totalnumber}{99}

\usepackage{graphicx}
\usepackage{parskip}
\usepackage{enumerate}
\usepackage{verbatim}
\usepackage{units}

\usetikzlibrary{calc}
\usepackage[outline]{contour}
\contourlength{1.2pt}

\begingroup
    \makeatletter
    \@for\theoremstyle:=definition,remark,plain\do{%
        \expandafter\g@addto@macro\csname th@\theoremstyle\endcsname{%
            \addtolength\thm@preskip\parskip
            }%
        }
\endgroup

\newtheorem{theorem}{Theorem}[section]
\newtheorem*{theorem*}{Theorem}
\newtheorem*{thm-dirac}{Dirac's Theorem}
\newtheorem*{thm-ore}{Ore's Theorem}
\newtheorem{lemma}[theorem]{Lemma}

\theoremstyle{definition}

\newtheorem*{remark*}{Remark}
\newtheorem{claim}[theorem]{Claim}

\begin{document}

\title{A sharp Ore-type condition for a connected graph with no induced star to have a Hamiltonian path}

\author{
{Ilkyoo Choi}\thanks{
Supported by the Basic Science Research Program through
the National Research Foundation of Korea (NRF) funded by the Ministry of
Education (NRF-2018R1D1A1B07043049), and also by Hankuk University of Foreign
Studies Research Fund.
Department of Mathematics, Hankuk University of Foreign Studies, Yongin-si,
Gyeonggi-do, Republic of Korea.
\texttt{ilkyoo@hufs.ac.kr}.
}
\and
Jinha Kim\thanks{
Department of Mathematical Sciences, Seoul National University, Seoul, Republic of Korea.
\texttt{kjh1210@snu.ac.kr}
}
}

\date\today

\maketitle

\begin{abstract}
We say a graph $G$ has a {\it Hamiltonian path} if it has a path containing all vertices of $G$. 
For a graph $G$, let $\sigma_2(G)$ denote the minimum degree sum of two nonadjacent vertices of $G$; 
restrictions on $\sigma_2(G)$ are known as {\it Ore-type conditions}. 
Given an integer $t\geq 5$, we prove that if a connected graph $G$ on $n$ vertices satisfies $\sigma_2(G)>{t-3\over t-2}n$, then $G$ has either a Hamiltonian path or an induced subgraph isomorphic to $K_{1, t}$. 
Moreover, we characterize all $n$-vertex graphs $G$ where $\sigma_2(G)={t-3\over t-2}n$ and $G$ has neither a Hamiltonian path nor an induced subgraph isomorphic to $K_{1, t}$. 
This is an analogue of a recent result by Mom\`ege~\cite{2018Momege}, who investigated the case when $t=4$. 
\end{abstract}

% INTRO
\section{Introduction}

Given a graph $G$, let $V(G)$ and $E(G)$ denote the vertex and edge set, respectively, of $G$.
%The {\it order} of a graph is the number of vertices of the graph.
Also, let $c(G)$ and $p(G)$ denote the number of vertices on a longest cycle and path, respectively, in $G$. 
We say a graph $G$ has a {\it Hamiltonian cycle} if $G$ has a cycle containing all vertices of $G$; in other words, $c(G)=|V(G)|$.
Similarly, a {\it Hamiltonian path} is a path containing all vertices. 

Since it is NP-complete to determine if Hamiltonian cycles exist in graphs, finding sufficient conditions for a graph to have a Hamiltonian cycle is of great interest, to the extent that it has formed an entire branch of extremal and structural graph theory. 
We dare not provide a summery of the vast history related to Hamiltonian cycles and paths, but we refer the readers to excellent surveys~\cite{1991Gould,2003Gould,2014Gould,2013Li}.
We will however state two celebrated results. 

In the literature, a {\it Dirac-type condition} typically refers to a lower bound on the minimum degree of (a vertex of) a graph.
This stems from the following classical result by Dirac, who proved a sharp lower bound on the minimum degree that guarantees a Hamiltonian cycle. 

\begin{thm-dirac}[\cite{1952Dirac}]\label{thm:dirac}
If $G$ is a graph on $n\geq 3$ vertices satisfying that each vertex has degree at least ${n\over 2}$, then $G$ has a Hamiltonian cycle. 
\end{thm-dirac}

Another important condition considered in the literature is an {\it Ore-type condition}, which is a lower bound on the minimum degree sum of two nonadjacent vertices.
As usual, let $\sigma_k(G)=\min\{\deg(x_1)+\cdots+\deg(x_k): x_1, \ldots, x_k \mbox{are pairwise nonadjacent}\}$;
therefore, an Ore-type condition is a lower bound on $\sigma_2$. 
Strengthening Dirac's Theorem above, Ore proved a sharp lower bound on $\sigma_2$ that guarantees a Hamiltonian cycle. 

\begin{thm-ore}[\cite{1960Ore}]\label{thm:ore}
If $G$ is a graph on $n\geq 3$ vertices satisfying $\sigma_2(G)\geq n$, then $G$ has a Hamiltonian cycle. 
\end{thm-ore}

It is not hard to deduce a sharp Dirac-type condition and a sharp Ore-type condition that guarantee Hamiltonian paths using Dirac's Theorem and Ore's Theorem.

Very recently, Mom\`ege~\cite{2018Momege} proved that there exists an Ore-type condition that guarantees a connected graph to have either a Hamiltonian path or an induced subgraph isomorphic to $K_{1, 4}$. 
The exact statement is the following:

\begin{theorem}[\cite{2018Momege}]\label{thm:k14}
If $G$ is a connected graph on $n$ vertices satisfying $\sigma_2(G)\geq {2\over 3}n$, then $G$ has either a Hamiltonian path or an induced subgraph isomorphic to $K_{1, 4}$.
\end{theorem}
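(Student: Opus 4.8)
The plan is to argue by contradiction: suppose $G$ is connected with $\sigma_2(G)\ge\frac{2}{3}n$ but has neither a Hamiltonian path nor an induced $K_{1,4}$, and fix a longest path $P=v_1v_2\cdots v_p$. Since $G$ is connected with no Hamiltonian path, $p<n$ and some vertex lies off $P$. Two facts will be used throughout: the standard rotation/crossing arguments along a longest path, and the reformulation of $K_{1,4}$-freeness as \emph{no vertex has four pairwise nonadjacent neighbours} — equivalently, for every independent set $I$ and every vertex $u$ one has $|N(u)\cap I|\le 3$, since four neighbours inside $I$ would induce a $K_{1,4}$.

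First I would record the endpoint and attachment structure. Maximality gives $N(v_1),N(v_p)\subseteq V(P)$; if $v_1\sim v_p$ then $P$ closes to a spanning cycle of $V(P)$, and connectivity plus an off-path vertex yields a longer path, so in fact $v_1\not\sim v_p$ and $\deg(v_1)+\deg(v_p)\ge\frac{2}{3}n$. The no-crossing observation (a pair $v_1\sim v_j$, $v_p\sim v_{j-1}$ again produces a spanning cycle) shows $\{\,j-1:v_1\sim v_j\,\}$ and $\{\,j:v_p\sim v_j\,\}$ are disjoint subsets of $\{1,\dots,p-1\}$, so $\deg(v_1)+\deg(v_p)\le p-1\le n-2$. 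Now let $w$ be an off-path vertex with a neighbour on $P$ (one exists by connectivity). Maximality forces $w\not\sim v_1,v_p$ and forbids $w$ from meeting two consecutive vertices of $P$. The key structural claim is that the successor set $S^{+}=\{\,v_{i+1}:v_i\in N(w)\,\}$ is independent of size $d:=\deg(w)$: if $v_{i+1}\sim v_{j+1}$ with $v_i,v_j\in N(w)$ and $i<j$, then $v_1\cdots v_i\,w\,v_jv_{j-1}\cdots v_{i+1}v_{j+1}\cdots v_p$ is a path on $p+1$ vertices, a contradiction. Each $s\in S^{+}$ also satisfies $s\not\sim w$, so I may apply $\sigma_2$ to the nonadjacent pairs $(w,v_1)$, $(w,v_p)$, and $(w,s)$.

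The crux is a double count of $\sum_{s\in S^{+}}\deg(s)$. Since $s\not\sim w$, we get $\deg(s)\ge\frac{2}{3}n-d$ for every $s\in S^{+}$, so the sum is at least $d(\frac{2}{3}n-d)$. Since $S^{+}$ is independent, $K_{1,4}$-freeness gives $|N(u)\cap S^{+}|\le 3$ for all $u$, whence $\sum_{s\in S^{+}}\deg(s)=\sum_u|N(u)\cap S^{+}|\le 3(n-d)$. Comparing forces $d(\frac{2}{3}n-d)\le 3(n-d)$. To locate $d$, I use the endpoints: if $d\le 4$ then $\deg(v_1),\deg(v_p)\ge\frac{2}{3}n-4$, contradicting $\deg(v_1)+\deg(v_p)\le n-2$ once $n$ is large; hence $d\ge 5$, while non-consecutiveness gives $d\le\frac{p}{2}<\frac{n}{2}$. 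For $5\le d<\frac n2$ and $n$ large, however, $d(\frac{2}{3}n-d)>3(n-d)$, which is the desired contradiction.

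The main obstacle, and what separates this clean asymptotic argument from a proof of the \emph{sharp} statement, is the boundary behaviour: when $\sigma_2(G)=\frac{2}{3}n$ exactly, and for the finitely many small $n$, the inequalities above degrade to equalities, so I would have to examine the near-extremal configurations directly — tracking precisely how $N(v_1)$, $N(v_p)$, and the successor sets $S^{+}$ interleave along $P$, and eliminating the would-be extremal graphs (morally, a small central clique joined to several disjoint cliques). It is exactly this equality analysis that, carried out for general $t$ with the bound $|N(u)\cap I|\le t-1$ in place of $3$, should reproduce the threshold $\frac{t-3}{t-2}n$ and the characterization promised in the abstract, so I expect this bookkeeping to be the technical heart of the argument.
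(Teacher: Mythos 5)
A point of reference first: the paper does not prove Theorem~\ref{thm:k14} at all --- it is imported from Mom\`ege~\cite{2018Momege} and used as a black box in Section~\ref{sec:remarks} --- so your proposal can only be measured against the paper's proof of the analogous Theorem~\ref{thm:main}. That proof works with a longest \emph{cycle} rather than a longest path: it upgrades the $\sigma_2$ hypothesis to $\sigma_3(G)> n$ via Lemma~\ref{lem:sig}, invokes Lemma~\ref{lem:dom} to get $c(G)=p(G)-1$ and, crucially, that every off-cycle vertex has \emph{all} of its neighbours on the cycle, and then exhibits an induced star explicitly, centred at a common neighbour of two off-cycle vertices. Your global double count is a genuinely different strategy, but as written it has real gaps.

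Two concrete problems. First, you set $d:=\deg(w)$ and treat $S^{+}$ as an independent set of size $d$, which silently assumes $N(w)\subseteq V(P)$. Maximality of $P$ alone does not give this: a vertex attached to the middle of $P$ can have further off-path neighbours without creating a longer path. This is precisely what Lemma~\ref{lem:dom} supplies (and it is available here, since $\sigma_3(G)\geq \frac{3}{2}\sigma_2(G)\geq n$), but it must be invoked or reproved; without it, $|S^{+}|$ and the quantity $d$ appearing in $\deg(s)\geq\frac{2}{3}n-d$ are different numbers and the comparison collapses. Relatedly, the bound $\sum_{s\in S^{+}}\deg(s)\leq 3(n-d)$ is not justified --- independence of $S^{+}$ plus $K_{1,4}$-freeness gives only $|N(u)\cap S^{+}|\leq 3$ for every vertex $u$, hence $\leq 3n$ --- although the weaker bound still suffices for your asymptotic conclusion. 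Second, and more seriously, every inequality you use closes only ``for $n$ large'' (roughly $n>18$ for the endpoint contradiction and $n>30$ for the double count), and you explicitly defer the equality case $\sigma_2(G)=\frac{2}{3}n$ together with the small-$n$ and near-extremal analysis. Since the theorem is sharp, is stated with a non-strict hypothesis, and claims all $n$, that deferred bookkeeping is not a routine afterthought --- it is where the content of the statement lives (compare the sharpness example $K_{t-3,t-1}$ and the equality analysis in Section~\ref{sec:remarks}). As it stands, the proposal establishes only an asymptotic weakening of the theorem.
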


Our main result is in the spirit of the aforementioned result.
For an integer $t\geq 5$, we prove a sharp Ore-type condition that guarantees the existence of either a Hamiltonian path or an induced subgraph isomorphic to $K_{1, t}$. 
Our main result is the following: 

\begin{theorem}\label{thm:main}
For an integer $t \geq 5$, if $G$ is a connected graph on $n$ vertices satisfying $\sigma_2(G) > \frac{t-3}{t-2}n$, then $G$ has either a Hamiltonian path or an induced subgraph isomorphic to $K_{1,t}$.
\end{theorem}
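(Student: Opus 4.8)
The plan is to proceed by contradiction: assume $G$ is connected on $n$ vertices with $\sigma_2(G) > \frac{t-3}{t-2}n$, yet $G$ has neither a Hamiltonian path nor an induced $K_{1,t}$. The first step is to take a longest path $P = v_1 v_2 \cdots v_p$ in $G$; since $G$ has no Hamiltonian path, $p < n$, so there is a vertex off $P$. Because $G$ is connected, such an off-path vertex has a neighbor on $P$. The endpoints $v_1$ and $v_p$ have all their neighbors on $P$ (by maximality of $P$), and crucially they must be nonadjacent (otherwise $P$ plus the edge $v_1 v_p$ yields a cycle, and a vertex attached to that cycle would extend to a longer path). Thus $\deg(v_1) + \deg(v_p) \geq \sigma_2(G) > \frac{t-3}{t-2}n$, giving me a strong lower bound on how the endpoints attach to $P$.

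The heart of the argument is the classical rotation technique combined with the forbidden-star hypothesis. The standard Ore/Pósa observation is that if $v_1$ is adjacent to $v_{i+1}$, then $v_p$ cannot be adjacent to $v_i$, so the neighbor-sets of the two endpoints occupy disjoint "slots" among the $p-1$ indices, forcing $\deg(v_1) + \deg(v_p) \leq p - 1 < n$, and then the endpoint degree sum controls $p$ versus $n$. Here I would push this further using \emph{rotations}: fixing $v_p$ as a pivot, whenever $v_1 v_{i+1} \in E(G)$ the path $v_i v_{i-1} \cdots v_1 v_{i+1} v_{i+2} \cdots v_p$ is another longest path with new endpoint $v_i$, so $v_i$ also has all neighbors on $P$ and must be nonadjacent to $v_p$. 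This generates a large set of path-endpoints, each of high degree and each confined to $P$. The goal is to show that the many neighbors that the high-degree endpoints accumulate along a short stretch of $P$ force some vertex of $P$ to have $t$ mutually nonadjacent neighbors — an induced $K_{1,t}$ — contradicting the hypothesis.

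Concretely, I would set up the counting so that the density forced by $\sigma_2(G) > \frac{t-3}{t-2}n$ guarantees a vertex $u$ on $P$ together with $t$ neighbors $x_1, \ldots, x_t$ that are pairwise nonadjacent. The threshold $\frac{t-3}{t-2}$ is exactly what is needed: it should arise from partitioning the path (or the interval of indices receiving endpoint-edges) into roughly $t-2$ blocks and arguing that if every vertex avoided having $t$ pairwise-nonadjacent neighbors, then each block could contribute only a bounded amount of "independent" attachment, capping the degree sum at $\frac{t-3}{t-2}n$. To make the nonadjacency of the $x_i$ rigorous, I would again invoke path-maximality and the cycle-avoidance principle: consecutive "interval" neighbors of $v_1$ (or of a rotated endpoint) cannot be joined without creating a cycle through all of $P$ that could then be extended, which is precisely how one manufactures independent sets among the neighbors of a single vertex. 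The sharpness and the equality characterization (deferred to the companion statement) suggest the extremal configuration is a blow-up–type graph built from $t-2$ cliques glued along a path, which is the picture I would keep in mind to calibrate the constants.

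The main obstacle I anticipate is the bookkeeping in the rotation step: a single endpoint $v_1$ confined to $P$ only gives $\deg(v_1) \leq$ (number of its neighbors on $P$), which is not by itself enough to force a $K_{1,t}$; I need to combine \emph{both} endpoints' neighborhoods, track how rotations move the endpoint and hence enlarge the set of vertices forced to lie on $P$, and carefully argue that among these accumulated neighbors a set of size $t$ is pairwise nonadjacent \emph{and} shares a common neighbor. Controlling the interaction between the two endpoints' neighbor-intervals — ensuring the independent set of size $t$ genuinely hangs off a single vertex rather than being split between two pivots — is where the argument will be most delicate, and getting the constant $\frac{t-3}{t-2}$ to be tight will hinge on handling the boundary blocks of $P$ (near $v_1$ and $v_p$) separately from the interior.
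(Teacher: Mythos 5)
There is a genuine gap: your write-up is a plan rather than a proof, and the step that actually matters --- producing a single vertex with $t$ pairwise nonadjacent neighbors --- is asserted ("I would set up the counting so that\ldots", "the goal is to show\ldots") rather than carried out. The difficulty you correctly flag at the end (getting the independent set of size $t$ to hang off one pivot, and getting the constant $\frac{t-3}{t-2}$ to come out of the bookkeeping) is precisely the content of the theorem, and the direction you sketch for resolving it does not look like it closes. In particular, trying to find all $t$ leaves of the star among vertices of the longest path $P$ is harder than necessary: the paper's proof works with a longest \emph{cycle} $C$ (via Mom\`ege's lemmas, $\sigma_3(G)>n$ gives $c(G)=p(G)-1$ and every off-cycle vertex has all neighbors on $C$), takes \emph{two} off-cycle vertices $u,v$ --- which exist because $m\le n-2$ --- and uses $u$ and $v$ themselves as two of the $t$ leaves. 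The center is a common neighbor $c_l$ of $u$ and $v$, two more leaves are $c_{l-1},c_{l+1}$, and only $t-4$ further leaves need to be found, namely successors $c_{j+1}$ of indices $j$ in the set $I(v)=\{i: vc_i,vc_{i+2}\in E(G)\}$. Their pairwise nonadjacency comes from two structural claims (consecutive cycle vertices cannot both have off-cycle neighbors; if $c_ic_j$ is a chord then $c_{i+1},c_{j+1}$ cannot both have off-cycle neighbors), and the existence of $t-4$ such leaves adjacent to $c_l$ comes from a counting argument that applies $\sigma_{t-2}(G)\ge\frac{t-2}{2}\sigma_2(G)>\frac{t-3}{2}n$ to $t-2$ pairwise nonadjacent vertices $c_{i_1+1},\dots,c_{i_{t-2}+1}$ with $i_k\in I(v)$. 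None of these ingredients (the off-cycle vertices as leaves, the set $I(v)$, the $\sigma_{t-2}$ bound) appears in your sketch, and without them the "partition $P$ into $t-2$ blocks" heuristic has no mechanism to force a common neighbor for the independent set.

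A secondary but telling issue: your guess for the extremal configuration ("a blow-up--type graph built from $t-2$ cliques glued along a path") is wrong. The sharpness examples are $K_{t-3,t-1}$ and more generally $H\vee\overline{K}_{t-1}$ with $|V(H)|=t-3$, i.e.\ joins of a small graph with a large independent set. Since you say you would "calibrate the constants" against the extremal picture, starting from the wrong picture makes it unlikely that the block-counting you propose would land on $\frac{t-3}{t-2}$. The rotation/P\'osa machinery you invoke is a reasonable general-purpose tool for Ore-type theorems, but here the decisive trick is that forbidding an induced $K_{1,t}$ is exploited by building the star's leaves partly from vertices \emph{off} the longest cycle, and that is the idea your proposal is missing.
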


The threshold on the Ore-type condition is sharp, as one can see from the following example:
Fix $t\geq 5$, and let $H_t$ be a copy of $K_{t-3,t-1}$.
The graph $H_t$ is a connected graph on $2t-4$ vertices satisfying $\sigma_2(H_t) = 2(t-3) =\frac{t-3}{t-2}(2t-4)$.
Note that $H_t$ has neither a Hamiltonian path nor an induced subgraph isomorphic to $K_{1, t}$.

Furthermore, for $t\geq 5$, we actually show that all sharpness examples for Theorem~\ref{thm:main} must be the join of a graph on $t-3$ vertices and $t-1$ pairwise nonadjacent vertices; for more details, see Section~\ref{sec:remarks}. 
We end this section with two lemmas, which appeared in~\cite{2018Momege}, that will be used in our proof of Theorem~\ref{thm:main} in Section~\ref{sec:proof}.

%%%%%%%%%%%%%%%%%%%

%\begin{lemma}[\cite{2018Momege}]\label{lem:c-p}
%Let $G$ be a connected graph on $n$ vertices and $k \leq n-1$. If $G$ contains a cycle on $k$ vertices, then it contains a path on $k+1$ vertices.
%\end{lemma}

\begin{lemma}[\cite{2018Momege}]\label{lem:sig}
Let $G$ be a graph on $n$ vertices and $1 \leq k \leq n-1$. Then $\sigma_{k+1}(G) \geq \frac{k+1}{k}\sigma_k(G)$.
\end{lemma}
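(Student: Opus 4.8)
The plan is to exploit the fact that every set of $k+1$ pairwise nonadjacent vertices contains exactly $k+1$ independent subsets of size $k$, and then to average the degree-sum bound $\sigma_k(G)$ over all of them. This is a short double-counting (or averaging) argument, so I do not expect a serious obstacle; the only minor point requiring care is the degenerate case in which $G$ admits no independent set of size $k+1$.

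First I would dispose of that degenerate case. If $G$ has no set of $k+1$ pairwise nonadjacent vertices, then under the usual convention $\sigma_{k+1}(G)=+\infty$, and the claimed inequality holds trivially. Hence I may assume such a set exists, and I let $S=\{x_1,\ldots,x_{k+1}\}$ be a set of $k+1$ pairwise nonadjacent vertices attaining the minimum, so that $\sigma_{k+1}(G)=\sum_{i=1}^{k+1}\deg(x_i)$.

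The key step is to observe that for each index $i$, the set $S\setminus\{x_i\}$ consists of exactly $k$ pairwise nonadjacent vertices, so by the definition of $\sigma_k$ its degree sum is at least $\sigma_k(G)$; that is, $\sigma_{k+1}(G)-\deg(x_i)\geq \sigma_k(G)$ for every $i$. Summing this inequality over all $i$ from $1$ to $k+1$, the left-hand side becomes $(k+1)\sigma_{k+1}(G)-\sum_{i=1}^{k+1}\deg(x_i)=(k+1)\sigma_{k+1}(G)-\sigma_{k+1}(G)=k\,\sigma_{k+1}(G)$, while the right-hand side is $(k+1)\sigma_k(G)$.

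Dividing through by $k$ then yields $\sigma_{k+1}(G)\geq \frac{k+1}{k}\sigma_k(G)$, which is the desired bound. The entire argument is elementary and self-contained, relying only on the definition of $\sigma_k$ and the elementary identity that summing $\deg(x_i)$ over the $k+1$ vertices of $S$ recovers $\sigma_{k+1}(G)$.
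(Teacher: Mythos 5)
Your proof is correct: removing each vertex in turn from a minimizing independent set of size $k+1$, applying the definition of $\sigma_k$ to each of the $k+1$ resulting independent $k$-sets, and summing gives $k\,\sigma_{k+1}(G)\geq(k+1)\sigma_k(G)$ exactly as you write, and you rightly handle the degenerate case where no independent set of size $k+1$ exists. The paper itself states this lemma without proof, citing Mom\`ege, and your averaging argument is the standard one for this fact, so there is nothing to compare beyond noting that your argument is complete and self-contained.
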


\begin{lemma}[\cite{2018Momege}]\label{lem:dom}
Let $G$ be a connected graph on $n$ vertices with $\sigma_3(G) \geq n$ and $p(G) \leq n-1$. Then 
\begin{enumerate}[$(a)$]
\item $c(G) = p(G)-1$ 
\item If $C$ is a longest cycle of $G$, then for all $v$ in $V(G) \setminus V(C)$ we have $N_G(v) \subset V(C)$.
\end{enumerate}
\end{lemma}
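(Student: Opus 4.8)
The plan is to argue through a longest path. Let $P = v_1 v_2 \cdots v_p$ be a longest path of $G$, so $p = p(G)$, and set $R = V(G) \setminus V(P)$, which is nonempty since $p \le n-1$. Because $P$ is longest, every neighbour of $v_1$ and of $v_p$ lies on $P$, and $v_1 v_p \notin E(G)$ (otherwise $P$ would close into a $p$-cycle, and connectivity together with $n > p$ would let me attach an outside vertex to obtain a path on $p+1$ vertices). The same attachment argument gives $c(G) \le p-1$: a cycle on $c(G)$ vertices yields a path on $c(G)$ vertices, so $c(G) \le p$, and a $p$-cycle would extend to a longer path. Hence the content of part $(a)$ is to prove $c(G) \ge p-1$.

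For this lower bound I would suppose, for contradiction, that $G$ has no cycle on $p-1$ vertices. Fix any $x \in R$; then $v_1, v_p, x$ are pairwise nonadjacent (their only possible adjacencies are ruled out above), so $\sigma_3(G) \ge n$ yields $\deg(v_1) + \deg(v_p) + \deg(x) \ge n$. Writing $A, B, K \subseteq \{2, \ldots, p-1\}$ for the index sets of the $P$-neighbours of $v_1$, $v_p$, $x$ respectively, I have $\deg(v_1) = |A|$, $\deg(v_p) = |B|$, and $\deg(x) \le |K| + (n-p-1)$ since $x$ has at most $|R|-1$ neighbours inside $R$. Substituting gives the purely combinatorial inequality $|A| + |B| + |K| \ge p+1$.

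Next I would extract the constraints forced by $P$ being longest: if $x v_k \in E(G)$ then $v_1 v_{k+1} \notin E(G)$ and $v_{k-1} v_p \notin E(G)$, since either adjacency would splice $x$ into $P$ to make a path on $p+1$ vertices. In index terms, $K$ is disjoint from both $A-1$ and $B+1$. The crux, and the step I expect to be the main obstacle, is to convert these disjointnesses, via inclusion–exclusion inside $\{2, \ldots, p-1\}$, into a bound of the form $|A| + |B| + |K| \le p + D$, where $D$ counts the indices $i$ with $v_1 v_{i+1} \in E(G)$ and $v_{i-1} v_p \in E(G)$. The payoff is that any such $i$ produces the cycle $v_1 v_2 \cdots v_{i-1} v_p v_{p-1} \cdots v_{i+1} v_1$, which visits every vertex of $P$ except $v_i$ and is therefore a cycle on $p-1$ vertices, contradicting our assumption. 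Hence $D = 0$, so $|A| + |B| + |K| \le p$, contradicting the inequality of the previous paragraph. This forces a $(p-1)$-cycle and, with $c(G) \le p-1$, establishes $c(G) = p-1$.

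Finally, part $(b)$ should follow from $(a)$ by an extension argument. Let $C$ be a longest cycle, so $|V(C)| = p-1$. If some $v \notin V(C)$ had a neighbour $u \in V(C)$ and a neighbour $w \notin V(C)$, then $w\,v\,u$ followed by a full traversal of $C$ starting at $u$ would be a path on $2 + (p-1) = p+1$ vertices, which is impossible. Thus each vertex off $C$ has either all neighbours on $C$ or all neighbours off $C$; propagating the second alternative along edges produces a component of $G$ disjoint from the nonempty set $V(C)$, contradicting connectivity. Therefore $N_G(v) \subseteq V(C)$ for every $v \in V(G) \setminus V(C)$.
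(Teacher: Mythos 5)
The paper does not prove this lemma---it is imported verbatim from Mom\`ege~\cite{2018Momege}---so there is no in-paper proof to compare against; judged on its own, your argument is correct and is the standard longest-path proof of such statements. Every step checks out: $v_1,v_p,x$ are pairwise nonadjacent, $|A|+|B|+|K|\ge p+1$ follows from $\sigma_3(G)\ge n$, and the ``crux'' you flagged does go through, since $K\cap(A-1)=K\cap(B+1)=\emptyset$ gives $|A|+|B|+|K|=|(A-1)\cup(B+1)\cup K|+|(A-1)\cap(B+1)|\le p+D$. One small correction: the shifted sets $A-1$ and $B+1$ live in $\{1,\dots,p-2\}$ and $\{3,\dots,p\}$ respectively, so the inclusion--exclusion should be carried out inside $\{1,\dots,p\}$ rather than $\{2,\dots,p-1\}$ as you wrote; this only changes the ground set, and the bound $p+D$ you claim is exactly what that larger ground set yields. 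Part $(b)$ as you argue it (the $w\,v\,u$ extension plus propagation of the ``all neighbours off $C$'' alternative to contradict connectivity) is also sound.
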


%%%%%%%%%%%%%%%%%%%%%%%%%%%%%%%%%%
%%%%%%%%%%%%%%%%%%%%%%%%%%%%%%%%%%
%%%%%%%%%%%%% SECTION %%%%%%%%%%%%%%%
%%%%%%%%%%%%%%%%%%%%%%%%%%%%%%%%%%
%%%%%%%%%%%%%%%%%%%%%%%%%%%%%%%%%%
\section{Proof of Theorem~\ref{thm:main}}\label{sec:proof}

Let $G$ be a connected graph on $n$ vertices satisfying $\sigma_2(G) > {t-3\over t-2} n$ for some integer $t\geq 5$.
Assume $G$ has no Hamiltonian path, so our goal is to prove that $G$ has an induced subgraph isomorphic to $K_{1, t}$. 
Let $C=c_1c_2\ldots c_mc_1$ be a longest cycle of $G$.
By Lemma~\ref{lem:sig}, we know $\sigma_3(G)>n$ since $t\geq 5$ and $\sigma_3(G) \geq \frac{3}{2} \sigma_2(G) > {3\over 2}\cdot{2\over 3}n =n$.
Now Lemma~\ref{lem:dom} is applicable to $G$, so $p(G)=m+1$ and vertices not on $C$ are pairwise nonadjacent. 
Since $G$ has no Hamiltonian path, we know $m\leq n-2$, so $G$ has at least two vertices not on $C$; let $u$ and $v$ be two such vertices. 
Since $\deg_G(u)+\deg_G(v) \geq \sigma_2(G) > {t-3\over t-2} n$, without loss of generality, we may assume $\deg_G(v) > {t-3\over 2(t-2)}{n}$.

We will abuse notation and say addition on the indicies of vertices on $C$ are modulo $m$; for example, $c_0=c_m$ and $c_{m+1}=c_1$.
Define $I(v):=\{i \in \{1,2,\dots,m\} : vc_{i},vc_{i+2} \in E(G)\}$.

We now prove a sequence of claims. 

\begin{claim}\label{claim:cycleedge} 
For an integer $i$, $c_i$ and $c_{i+1}$ cannot both have neighbors not on $C$. 
\end{claim}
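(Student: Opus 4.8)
The plan is to argue by contradiction, leaning on the two structural facts already established for the longest cycle $C = c_1 c_2 \cdots c_m c_1$: that $p(G) = m+1$, and that by Lemma~\ref{lem:dom}$(b)$ every off-cycle vertex has all its neighbors on $C$ (so that off-cycle neighbors of cycle vertices are well understood). Suppose, for contradiction, that for some $i$ both $c_i$ and $c_{i+1}$ have a neighbor outside $C$: pick $u \notin V(C)$ with $uc_i \in E(G)$ and $v \notin V(C)$ with $vc_{i+1} \in E(G)$. The whole argument is then a short rerouting of $C$ through $u$ and $v$, split according to whether $u = v$.

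If $u = v$, then this one vertex is adjacent to both endpoints of the cycle edge $c_i c_{i+1}$, so replacing that edge by the detour $c_i\,u\,c_{i+1}$ produces the cycle
\[
c_1 \cdots c_i\, u\, c_{i+1} \cdots c_m c_1
\]
on $m+1$ vertices, contradicting the maximality of $C$. This degenerate case is handled by lengthening the \emph{cycle}.

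The main case is $u \neq v$, where instead I would build an overly long \emph{path}. The idea is to traverse $C$ the long way round, omitting only the edge $c_i c_{i+1}$, and to hang $u$ and $v$ off the two resulting ends:
\[
u,\ c_i,\ c_{i-1},\ \ldots,\ c_{i+2},\ c_{i+1},\ v .
\]
Its consecutive pairs are all edges (the cycle edges other than $c_i c_{i+1}$, together with $uc_i$ and $c_{i+1}v$), and its vertices are distinct, since the $c_j$ are distinct cycle vertices while $u$ and $v$ lie off $C$ with $u \neq v$. Hence this is a path on $m+2$ vertices, contradicting $p(G) = m+1$.

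I expect no substantial obstacle here; the argument is essentially a one-line reroute in each case, with only the indices read modulo $m$. The single point that genuinely needs care is the case separation: when $u = v$ the path construction above would reuse that vertex and collapse, so the contradiction must come from enlarging the cycle rather than the path, whereas the $u \neq v$ case is exactly where the $m+2$-vertex path is forced.
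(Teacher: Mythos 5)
Your argument is correct and matches the paper's proof essentially verbatim: both delete the edge $c_ic_{i+1}$ and attach the off-cycle neighbors, obtaining a longer cycle when the two neighbors coincide and a path on $m+2$ vertices (contradicting $p(G)=m+1$) when they differ.
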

\begin{proof}
Let $x, y\notin V(C)$ such that $xc_i, yc_{i+1}\in E(G)$. 
Consider $C'=C+xc_{i}+yc_{i+1}-c_{i}c_{i+1}$.
If $x=y$, then $C'$ is a cycle that is longer than $C$, which is a contradiction. 
If $x\neq y$, then $C'$ is a path on $m+2$ vertices, which contradicts $p(G)=m+1$. 
See Figure~\ref{fig:cycleedge}.
\begin{figure}[htbp]
    \centerline{\includegraphics[scale=1]{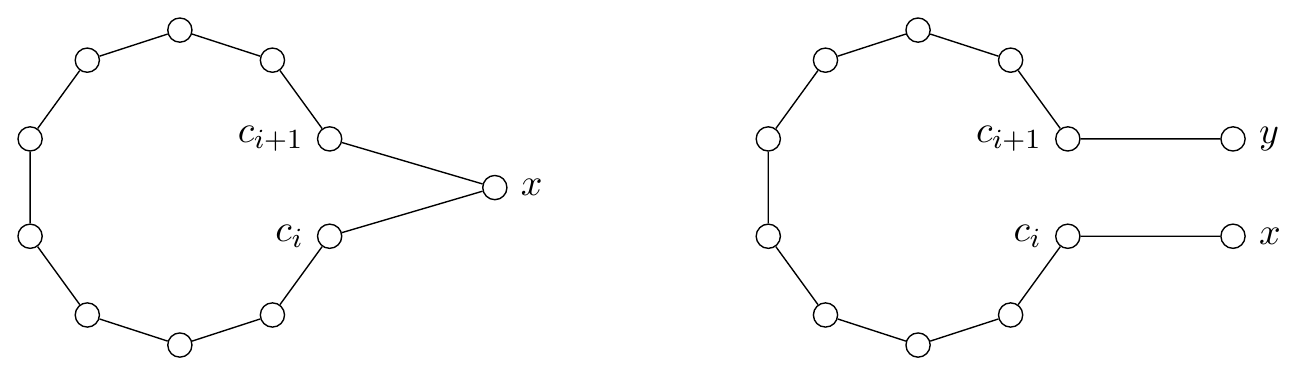}}
    \caption{$c_i$ and $c_{i+1}$ cannot both have neighbors not on $C$.}
    \label{fig:cycleedge}
\end{figure}
\end{proof}

\begin{claim}\label{claim:chord} 
For integers $i$ and $j$ where $|i-j|>1$, if $c_ic_j\in E(G)$, then $c_{i+1}$ and $c_{j+1}$ cannot both have neighbors not on $C$. 
\end{claim}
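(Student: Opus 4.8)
The plan is to mimic the proof of Claim~\ref{claim:cycleedge}, but to exploit the chord $c_ic_j$ in order to reroute the cycle. Suppose toward a contradiction that there exist $x,y\notin V(C)$ with $xc_{i+1}\in E(G)$ and $yc_{j+1}\in E(G)$. By relabeling we may assume $i<j$; since $|i-j|>1$ the edge $c_ic_j$ is a genuine chord, so the four vertices $c_i,c_{i+1},c_j,c_{j+1}$ are distinct and neither of the two arcs of $C$ between $c_i$ and $c_j$ is trivial.

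The key step is to convert $C$ into a Hamiltonian path of $C$ whose two endpoints are exactly $c_{i+1}$ and $c_{j+1}$. Deleting the two cycle edges $c_ic_{i+1}$ and $c_jc_{j+1}$ splits $C$ into the two arcs $c_{i+1}c_{i+2}\cdots c_j$ and $c_{j+1}c_{j+2}\cdots c_i$, and the chord $c_ic_j$ then lets me glue together the ends $c_j$ and $c_i$ of these arcs. Concretely, I would take the path
\[
P \;=\; c_{i+1}\,c_{i+2}\cdots c_j\,c_i\,c_{i-1}\cdots c_{j+1},
\]
which traverses the first arc forward, crosses the chord $c_jc_i$, and then traverses the second arc backward. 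One checks that $P$ visits every vertex of $C$ exactly once and has endpoints $c_{i+1}$ and $c_{j+1}$, so $P$ is a Hamiltonian path of $C$ on $m$ vertices.

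With $P$ in hand, I attach the off-cycle neighbors at its two ends. If $x=y$, then prepending and appending $x$ to $P$ yields a cycle on $m+1$ vertices, contradicting the maximality of $C$. If $x\neq y$, then prepending $x$ and appending $y$ yields a path on $m+2$ vertices, contradicting $p(G)=m+1$. Either way we reach a contradiction, which proves the claim.

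The only delicate point, and the step I expect to require the most care, is verifying that $P$ is genuinely a Hamiltonian path of $C$: one must track the index arithmetic modulo $m$ to confirm that the two arcs $\{c_{i+1},\dots,c_j\}$ and $\{c_{j+1},\dots,c_i\}$ partition $V(C)$ and that the rerouting uses only cycle edges together with the single chord $c_ic_j$. Once this bookkeeping is settled, the contradiction argument is identical in spirit to that of Claim~\ref{claim:cycleedge}, with the chord playing the role previously played by the deleted edge $c_ic_{i+1}$.
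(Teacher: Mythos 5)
Your proposal is correct and is essentially the paper's own argument: the paper forms $C'=C+xc_{i+1}+yc_{j+1}+c_ic_j-c_ic_{i+1}-c_jc_{j+1}$, which is exactly your path $P$ with $x$ and $y$ attached at its ends, and draws the same two contradictions. You have merely spelled out the index bookkeeping that the paper leaves implicit.
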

\begin{proof}
Let $x,y\notin V(C)$ such that $xc_{i+1}, yc_{j+1}\in E(G)$. 
Consider $C'=C+xc_{i+1}+yc_{j+1}+c_ic_j-c_ic_{i+1}-c_jc_{j+1}$.
If $x=y$, then $C'$ is a cycle that is longer than $C$, which is a contradiction. 
If $x\neq y$, then $C'$ is a path on $m+2$ vertices, which contradicts $p(G)=m+1$.
See Figure~\ref{fig:chord}.
\begin{figure}[htbp]
    \centerline{\includegraphics[scale=1]{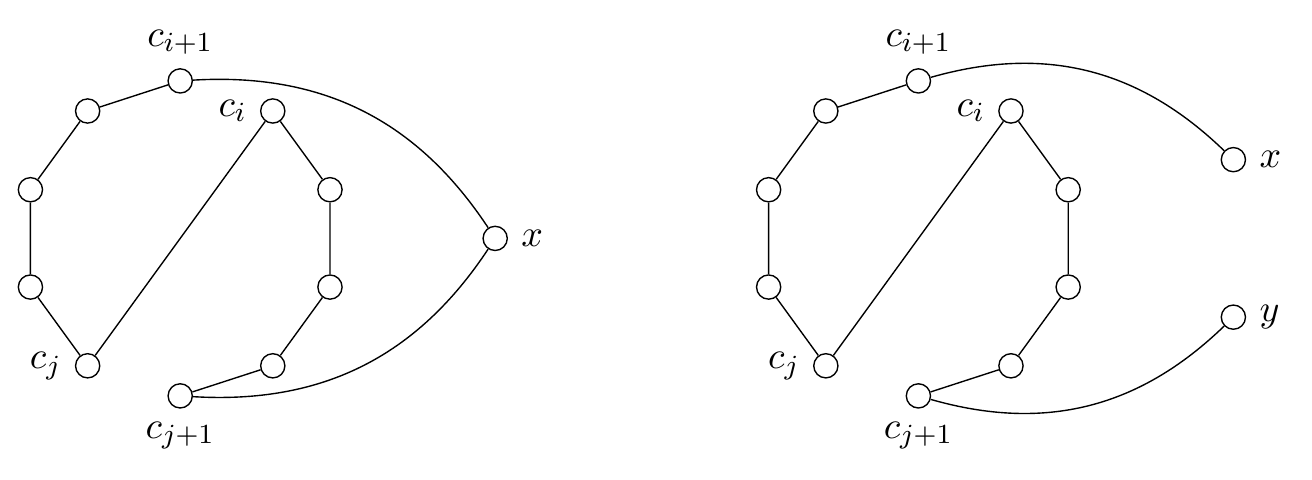}}
    \caption{If $c_ic_j\in E(G)$, then $c_{i+1}$ and $c_{j+1}$ cannot both have neighbors not on $C$.}
    \label{fig:chord}
\end{figure}
\end{proof}

\begin{claim}\label{claim:sizei}
$|I(v)|>t-3$.
\end{claim}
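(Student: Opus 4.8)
The plan is to reinterpret $|I(v)|$ through the cyclic pattern of $v$'s neighbors on $C$. First I would record that, since $v \notin V(C)$, Lemma~\ref{lem:dom}$(b)$ gives $N_G(v) \subseteq V(C)$; writing $S := \{i : vc_i \in E(G)\}$, we have $|S| = \deg_G(v) > \frac{t-3}{2(t-2)}n$. The quantity $I(v)$ simply records those $i \in S$ with $i+2 \in S$ as well, so everything reduces to understanding how $S$ is distributed around the cycle.

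The key structural input is Claim~\ref{claim:cycleedge}. If two cyclically consecutive indices $i$ and $i+1$ both lay in $S$, then both $c_i$ and $c_{i+1}$ would have the off-cycle neighbor $v$, contradicting Claim~\ref{claim:cycleedge}. Hence $S$ contains no two consecutive indices, so listing the elements of $S$ in cyclic order produces $|S|$ gaps, each of length at least $2$ and summing to $m$. Because consecutive indices never both occur in $S$, an index $i \in S$ satisfies $i+2 \in S$ precisely when the gap immediately after $i$ has length exactly $2$; therefore $|I(v)|$ equals the number of gaps of length exactly $2$.

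From here the argument is a short count. With $d := |S|$ gaps summing to $m$, each of length at least $2$ and each non-length-$2$ gap of length at least $3$, I get $|I(v)| \ge 3d - m$. Since $S$ is an independent set of the cycle $C_m$, also $d \le \lfloor m/2 \rfloor$; feeding this together with $n \ge m+2$ into the degree bound forces $m > 2(t-3)$. Substituting $d > \frac{t-3}{2(t-2)}n$ and $n \ge m+2$ into $3d-m$ and simplifying then yields $|I(v)| > t-3$.

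The main obstacle is that the naive bound one first writes down, namely $|I(v)| \ge 2d - m$ from inclusion--exclusion, is too weak and can even be negative for large $m$. The decisive step is using Claim~\ref{claim:cycleedge} to rule out consecutive neighbors, which is exactly what upgrades the estimate to $3d - m$ and supplies the slack needed to beat $t-3$. The closing arithmetic is routine but should be checked with care in the extreme case $t = 5$, where the chain of inequalities (and the bound $m > 2(t-3)$) is essentially tight.
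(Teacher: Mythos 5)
Your proposal is correct and matches the paper's argument in essence: both rest on the inequality $|I(v)| \geq 3\deg_G(v) - |V(C)|$ (you via gap-counting around the cycle, the paper via counting the forbidden successors of neighbors), combined with $\deg_G(v) > \frac{t-3}{2(t-2)}n$, $|V(C)| \leq n-2$, and a lower bound on the cycle length ($m > 2t-6$ for you, $n > 2t-4$ in the paper, each obtained by a routine application of Claim~\ref{claim:cycleedge}). The $t=5$ case you flag does go through: the constant term $\frac{6(t-3)}{2(t-2)} = 2 = t-3$ already suffices once the strict inequality inherited from the degree bound is used.
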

\begin{proof}
We will first show that $n>2t-4$.
Suppose to the contrary that $n \leq 2t-4$.
By Claim~\ref{claim:cycleedge}, we know $\deg_G(v) \leq \frac{|V(C)|}{2}$ and $\deg_G(u) \leq \frac{|V(C)|}{2}$.
Now, $\sigma_2(G) \leq \deg_G(u)+\deg_G(v) \leq |V(C)| \leq n-2$.
On the other hand, since $\sigma_2(G) > \frac{t-3}{t-2}n \geq n-2$ when $n\leq 2t-4$ and $\sigma_2(G)$ is an integer, it follows that $\sigma_2(G) \geq n-1$, which is a contradiction.

Now, let us count the number of neighbors of $v$, which we know are all on $C$.
By Claim~\ref{claim:cycleedge}, there are $|N_G(v)|$ vertices on $C$ that cannot be neighbors of $v$. 
Also, if $c_i \in N_G(v)$ but $i \notin I(v)$, then $c_{i+2} \notin N_G(v)$ by definition of $I(v)$. 
Hence, $|N_G(v)| \leq |V(C)|-|N_G(v)|-\left(|N_G(v)|-|I(v)|\right)$.
Thus, $|I(v)| \geq 3|N_G(v)|-|V(C)| > \frac{3(t-3)}{2(t-2)}n-(n-2)=\left(\frac{3(t-3)}{2(t-2)}-1\right)n+2>t-3$.
\end{proof}

%%%%%%%%%%%%%%

\begin{claim}\label{claim:t-4}
For some $c_l \in N_G(u) \cap N_G(v)$, there exist $t-4$ distinct elements $j_1,j_2,\dots,j_{t-4} \in I(v)$ such that $c_{j_1+1},\dots,c_{j_{t-4}+1} \in N_G(c_l)-\{c_{l-1},c_{l+1}\}$.
\end{claim}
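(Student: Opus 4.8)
The plan is to produce the center $c_l$ as a common neighbor of $u$ and $v$ and then to exhibit the required $t-4$ leaves among the vertices $c_{j+1}$ with $j\in I(v)$. First I would record that $u$ and $v$ have many common neighbors. Every vertex of $N_G(u)\cup N_G(v)$ lies on $C$ and has a neighbor off $C$ (namely $u$ or $v$), so by Claim~\ref{claim:cycleedge} no two of them are consecutive along $C$; hence $|N_G(u)\cup N_G(v)|\le \lfloor m/2\rfloor$. Writing $W=N_G(u)\cap N_G(v)$, inclusion--exclusion gives $|W|\ge \deg_G(u)+\deg_G(v)-\lfloor m/2\rfloor\ge \sigma_2(G)-\tfrac{m}{2}> \tfrac{t-3}{t-2}n-\tfrac{n-2}{2}$, which is positive (indeed linear in $n$) for $t\ge 5$. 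So $W\ne\emptyset$, and in fact $|W|$ is large.

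The second ingredient is a rerouting that turns each $c_{j+1}$, $j\in I(v)$, into an off-cycle vertex. For $j\in I(v)$ we have $vc_j,vc_{j+2}\in E(G)$, so $C_j:=C-c_jc_{j+1}-c_{j+1}c_{j+2}+vc_j+vc_{j+2}$ is again a longest cycle, now containing $v$ and missing $c_{j+1}$; since $u\ne v$ is off $C$, both $u$ and $c_{j+1}$ are off $C_j$. Thus all of the structural facts proved for $C$ apply verbatim to $C_j$: by Lemma~\ref{lem:dom} the two off-cycle vertices $u$ and $c_{j+1}$ are nonadjacent and $N_G(c_{j+1})\subseteq V(C_j)$, and the argument of the previous paragraph applied to the pair $(u,c_{j+1})$ on $C_j$ shows that $c_{j+1}$ shares many neighbors with $u$. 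A parallel surgery (inserting $v$ between $c_a$ and $c_{a+1}$ using two neighbors $c_a,c_b$ of $v$) shows that the successors $\{c_{a+1}:c_a\in N_G(v)\}$ are pairwise nonadjacent, so in particular $J:=\{c_{j+1}:j\in I(v)\}$ is an independent set of size $|I(v)|\ge t-2$, a fact I expect to reuse when assembling the induced star.

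With these in hand I would extract the good center by a double count. Since $\sum_{c_l\in W}|N_G(c_l)\cap J|=\sum_{c_{j+1}\in J}|N_G(c_{j+1})\cap W|$, it suffices to bound the right-hand side from below and conclude by averaging that some $c_l\in W$ has at least $t-2$ neighbors in $J$; discarding the two cycle-neighbors $c_{l-1},c_{l+1}$ then leaves the required $t-4$ indices. To keep that discard harmless I would choose $c_l$ so that $c_{l-2},c_{l+2}\notin N_G(v)$, which forces $c_{l\pm1}\notin J$ and lowers the target to $t-4$. The main obstacle is exactly this quantitative step: the crude triple-neighborhood estimate $|N_G(c_{j+1})\cap W|\ge 2\sigma_2(G)-\tfrac32 m$ is nonpositive when $t=5$ (where $|I(v)|$ can be as small as $t-2$), so the counting cannot be finished by inclusion--exclusion alone. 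Overcoming this will require the sharper information available here --- that the two cycle-neighbors $c_j,c_{j+2}$ of each $c_{j+1}\in J$ already lie in $N_G(v)$, that Claim~\ref{claim:chord} forbids chords between the relevant successor vertices, and a positional choice of $c_l$ --- to push the count past $t-4$ while only assuming $|I(v)|\ge t-2$.
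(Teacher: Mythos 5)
You have set up the right frame---a double count of adjacencies between $W=N_G(u)\cap N_G(v)$ and $J=\{c_{j+1}:j\in I(v)\}$, with an averaging step to extract a good center $c_l$---and your preliminary observations (that $|N_G(u)\cup N_G(v)|\le\lfloor m/2\rfloor$ forces $W\neq\emptyset$, and that $J$ is an independent set of size at least $t-2$) are correct. But the proof has a genuine gap, and you identify it yourself: you cannot lower-bound $\sum_{c_{j+1}\in J}|N_G(c_{j+1})\cap W|$ well enough to make the averaging work, because the triple inclusion--exclusion bound $|N_G(c_{j+1})\cap W|\ge 2\sigma_2(G)-\tfrac32 m$ is vacuous already at $t=5$. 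That quantitative step is the entire content of the claim, so as written this is a plan with a hole at its center rather than a proof. The rerouting of $C$ through $v$ to make $c_{j+1}$ an off-cycle vertex is valid but does not supply the missing estimate, since it only controls $N_G(c_{j+1})\cap N_G(u)$ (a pairwise intersection), not the triple intersection you need.

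The two ingredients that close the gap in the paper's argument are both available to you and worth noting. First, the independence of $\{c_{i_1+1},\dots,c_{i_{t-2}+1}\}$ (for any $t-2$ indices in $I(v)$) is not merely for assembling the star later: combined with Lemma~\ref{lem:sig} it gives $\sum_{j}\deg_G(c_{i_j+1})\ge\sigma_{t-2}(G)\ge\frac{t-2}{2}\sigma_2(G)>\frac{t-3}{2}n$, a lower bound on the \emph{total} degree of these vertices that is much stronger than anything inclusion--exclusion yields vertex by vertex. Second, Claim~\ref{claim:chord} shows each $c_{i+1}$ is nonadjacent to $|N_G(u)\cup N_G(v)|-1$ vertices of $C$, which after bookkeeping gives $|N_G(c_{i+1})\setminus(W-\{c_i,c_{i+2}\})|\le 2+|C|-(|N_G(u)|+|N_G(v)|)\le n-\sigma_2(G)<\frac{n}{t-2}$; that is, almost all of each $\deg_G(c_{i_j+1})$ is concentrated in $W$. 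Subtracting, $\sum_j|N_G(c_{i_j+1})\cap(W-\{c_{i_j},c_{i_j+2}\})|>\frac{t-3}{2}n-n=\frac{t-5}{2}n$, and since $|W|\le|N_G(v)|\le\frac{n-2}{2}$, the average over $c_l\in W$ of $|\{j:c_{i_j+1}\in N_G(c_l)-\{c_{l-1},c_{l+1}\}\}|$ exceeds $t-5$, so some $c_l$ attains $t-4$. (The paper runs this as a contradiction against the assumption that every $c_l\in W$ attains at most $t-5$, but it is the same count.) Note also that no positional choice of $c_l$ avoiding $c_{l\pm2}\in N_G(v)$ is needed: the identification $c_l\in N_G(c_{i_j+1})-\{c_{i_j},c_{i_j+2}\}$ iff $c_{i_j+1}\in N_G(c_l)-\{c_{l-1},c_{l+1}\}$ handles the exclusion of $c_{l\pm1}$ automatically.
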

\begin{proof}
Suppose to the contrary that $|\{j \in I(v) : c_{j+1} \in N_G(c_l)-\{c_{l-1},c_{l+1}\}\}| \leq t-5$ for all $c_l \in N_G(u) \cap N_G(v)$.
For a particular $i \in I(v)$, let us count the number of neighbors of $c_{i+1}$.
Note that by Claim~\ref{claim:cycleedge}, all neighbors of $c_{i+1}$ are on $C$. 
By Claim~\ref{claim:chord}, $c_{i+1}$ cannot be adjacent to $|N_G(u)\cup N_G(v)|-1$ vertices on $C$.
Therefore, we obtain the following inequalities:
\begin{equation*}
\begin{aligned}
|N_G(c_{i+1}) \setminus (N_G(u) \cap N_G(v)-\{c_i,c_{i+2}\})| 
&\leq 2+|C|-3-(|N_G(u) \cap N_G(v)|-2)-(|N_G(u) \cup N_G(v)|-1)\\
&=2+|C|-(|N_G(u)|+|N_G(v)|)\\
&\leq n-\sigma_2(G)\\
&<n-\frac{t-3}{t-2}n=\frac{n}{t-2}.
\end{aligned}
\end{equation*}

\begin{equation*}
\begin{aligned}
|N_G(c_{i+1})| 
&\leq |N_G(c_{i+1}) \setminus (N_G(u) \cap N_G(v)-\{c_i,c_{i+2}\})|+|N_G(c_{i+1}) \cap (N_G(u) \cap N_G(v)-\{c_i,c_{i+2}\})|\\
&<\frac{n}{t-2}+|(N_G(c_{i+1})-\{c_i,c_{i+2}\}) \cap N_G(u) \cap N_G(v)|.
\end{aligned}
\end{equation*}

Let $i_1,\dots,i_{t-2}$ be distinct elements in $I(v)$, which we know exist by Claim~\ref{claim:sizei}.
Note that $c_l \in N_G(c_{i_j+1})-\{c_{i_j},c_{i_j+2}\}$ if and only if $c_{i_j+1} \in N_G(c_l)-\{c_{l-1},c_{l+1}\}$.
We obtain the following inequality: 
\begin{equation*}
\begin{aligned}
\deg_G(c_{i_1+1})+\dots+\deg_G(c_{i_{t-2}+1}) 
&< (t-2)\cdot \frac{n}{t-2} +\sum_{j=1}^{t-2} |(N_G(c_{i_j+1})-\{c_{i_j},c_{i_j+2}\}) \cap N_G(u) \cap N_G(v)|\\
&=n+\sum_{c_l \in N_G(u) \cap N_G(v)} |\{j \in \{i_1,\dots,i_{t-2}\}:c_l \in N_G(c_{i_j+1})-\{c_{i_j},c_{i_j+2}\}\}|\\
&= n+\sum_{c_l \in N_G(u) \cap N_G(v)} |\{j \in \{i_1,\dots,i_{t-2}\}:c_{i_j+1} \in N_G(c_l)-\{c_{l-1},c_{l+1}\}\}|\\
&\leq n+(t-5)|N_G(u) \cap N_G(v)| \\
&\leq n+(t-5)|N_G(v)| \\
&\leq n+(t-5)\frac{|V(C)|}{2} \\
&\leq n+\frac{t-5}{2}(n-2)=\frac{t-3}{2}n-(t-5).%\tag{1}
\end{aligned}
\end{equation*}

Note that $c_{i_1+1},\dots,c_{i_{t-2}+1}$ are pairwise nonadjacent in $G$ by Claim~\ref{claim:chord}.
Now, by Lemma~\ref{lem:sig},
\[
\deg_G(c_{i_1+1})+\deg_G(c_{i_2+1})+\dots+\deg_G(c_{i_{t-2}+1}) \geq \sigma_{t-2}(G) \geq \frac{t-2}{2}\sigma_2(G) >\frac{t-3}{2}n.
\]
This contradicts the above inequality. 
\end{proof}

%%%%%%%%%%%%%%

We now finish the proof by explicitly finding an induced subgraph isomorphic to $K_{1, t}$. 
Fix a vertex $c_l \in N_G(u) \cap N_G(v)$, and let $j_1,j_2,\dots,j_{t-4}$ be $t-4$ distinct elements in $I(v)$ such that $c_{j_1+1},\dots,c_{j_{t-4}+1} \in N_G(c_l)-\{c_{l-1},c_{l+1}\}$; 
these elements are guaranteed to exist by Claim~\ref{claim:t-4}.
We will show that the induced subgraph of $G$ on $u,v,c_{l-1},c_l,c_{l+1},c_{j_1+1},\dots,c_{j_{t-4}+1}$ is isomorphic to $K_{1,t}$; 
the vertex of degree $t$ is $c_l$.

\begin{itemize}
\item $c_lu,c_lv,c_lc_{l-1},c_lc_{l+1},c_lc_{j_1+1},c_lc_{j_2+1},\dots,c_lc_{j_{t-4}+1} \in E(G).$

Follows from the choice of $c_l$ and the choice of indices $j_1, \ldots, j_{t-4}$. 

\item $uv \notin E(G)$

Follows from the fact that $N_G(v)\subset V(C)$. 

\item $zc_q\notin E(G)$ for $z\in\{u, v\}$ and $q\in\{l-1, l+1, j_1+1, \ldots, j_{t-4}+1\}$.

Follows from Claim~\ref{claim:cycleedge} since $z$ is adjacent to $c_l, c_{j_1}, \ldots, c_{j_{t-4}}$.

\item $c_{l-1}c_{l+1} \notin E(G)$

If $c_{l-1}c_{l+1} \in E(G)$, then $G$ has a cycle $C+c_{l-1}c_{l+1}+c_lc_{j_1+1}+vc_{j_1}+vc_l-c_{j_1}c_{j_1+1}-c_{l-1}c_l-c_lc_{l+1}$ on $m+1$ vertices, which is a contradiction. 
See Figure~\ref{fig:help1}.

\begin{figure}[htbp]
    \centerline{\includegraphics[scale=1]{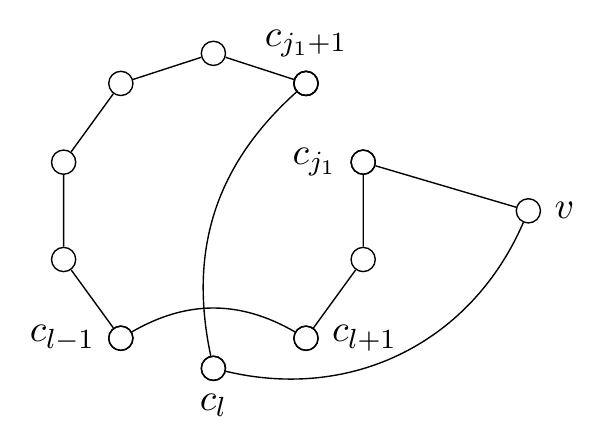}}
    \caption{A cycle that is longer than $C$.}
    \label{fig:help1}
\end{figure}

\item $c_pc_q\notin E(G)$ for $p\in\{l-1, l+1\}$ and $q\in\{j_1+1, \ldots, j_{t-4}+1\}$.

We will argue only $c_{l-1}c_{j_1+1}, c_{l+1}c_{j_1+1}\notin E(G)$, as the arguments for other cases are identical.
The former follows from Claim~\ref{claim:chord} since $v$ is adjacent to $c_l$ and $c_{j_1+2}$. 
The latter follows from Claim~\ref{claim:chord} since $v$ is adjacent to $c_l$ and $c_{j_1}$. 

\item $c_pc_q\notin E(G)$ for $p, q\in\{j_1+1, \ldots, j_{t-4}+1\}$. 

Follows from Claim~\ref{claim:chord} since $v$ is adjacent to both $c_{p+1}$ and $c_{q+1}$.

\end{itemize}

Therefore, $G$ has an induced subgraph isomorphic to $K_{1,t}$.
This concludes the proof of Theorem~\ref{thm:main}.

%%%%%%%%%%%%%%

\section{Remarks}\label{sec:remarks}

We can actually strengthen Theorem~\ref{thm:main} as in the following.
We use $G\vee H$ to denote the join of two graphs $G$ and $H$, and use $\overline{K}_{t-1}$ to denote $t-1$ pairwise nonadjacent vertices. 

\begin{theorem}
For $t\geq 5$, let $G$ be a connected graph on $n$ vertices satisfying $\sigma_2(G)\geq{t-3\over t-2}n$.
If $G$ has no Hamiltonian path, then $n\geq 2t-4$, and moreover, 
\begin{enumerate}
\item if $n> 2t-4$, then $G$ has an induced subgraph isomorphic to $K_{1, t}$.
\item if $n=2t-4$, then $G$ is of the form $H \vee \overline{K}_{t-1}$ where $H$ is a graph on $t-3$ vertices. 
\end{enumerate}
\end{theorem}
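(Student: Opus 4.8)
The plan is to re-run the proof of Theorem~\ref{thm:main} under the weaker hypothesis $\sigma_2(G)\ge\frac{t-3}{t-2}n$, tracking exactly where strictness was used and what the boundary case forces. The whole setup survives unchanged: $\sigma_2(G)\ge\frac{t-3}{t-2}n$ still yields $\sigma_3(G)\ge n$ via Lemma~\ref{lem:sig} (since $\frac{3(t-3)}{2(t-2)}\ge1$ for $t\ge5$), so Lemma~\ref{lem:dom} applies and gives a longest cycle $C$ of length $m$, with $s:=n-m\ge2$ pairwise nonadjacent vertices off $C$, two of which I call $u,v$ with $\deg_G u+\deg_G v\ge\sigma_2(G)$ and, WLOG, $\deg_G v\ge\frac{t-3}{2(t-2)}n$. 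First I would observe that $n\ge2t-4$ follows with no integrality argument: Claim~\ref{claim:cycleedge} gives $\frac{t-3}{t-2}n\le\sigma_2(G)\le\deg_G u+\deg_G v\le m\le n-2$, which rearranges to $n\ge2t-4$.

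For the boundary $n=2t-4$ (part (2)) this chain collapses: every inequality becomes an equality, so $\sigma_2(G)=2(t-3)$, $s=2$ (only $u,v$ lie off $C$), $m=2(t-3)$, and $\deg_G u=\deg_G v=t-3=\tfrac{m}{2}$. By Claim~\ref{claim:cycleedge} each of $N_G(u),N_G(v)$ is an independent set of the even cycle $C_m$ of size $\tfrac{m}{2}$, and their union is again independent of size at most $\tfrac{m}{2}$; hence $N_G(u)=N_G(v)=:O$ is a maximum independent set of $C_m$ and $E:=V(C)\setminus O$ is the complementary alternating set, with $|O|=|E|=t-3$. Now Claim~\ref{claim:chord} forbids any chord with both endpoints in $E$ (both successors would lie in $O$), so each $c\in E$ satisfies $N_G(c)\subseteq O$; but $c$ is nonadjacent to $u$, whence $\deg_G c\ge\sigma_2(G)-\deg_G u=t-3=|O|$ and therefore $N_G(c)=O$. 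Thus $E\cup\{u,v\}$ is an independent set of size $t-1$ completely joined to $O$, i.e.\ $G=G[O]\vee\overline K_{t-1}=H\vee\overline K_{t-1}$ with $|V(H)|=t-3$. This step is uniform in $t$.

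For part (1), $n>2t-4$, the goal is again an induced $K_{1,t}$, and I would re-run Claims~\ref{claim:sizei}--\ref{claim:t-4} with $\ge$ in place of $>$ and with $|C|=n-s$. Claim~\ref{claim:sizei}'s bound $|I(v)|\ge 3\deg_G v-m$ gives $|I(v)|>t-3$ whenever $n>2t-4$ and $t\ge6$, and also for $t=5$ as soon as $s\ge3$; this supplies $t-2$ indices $i_1,\dots,i_{t-2}\in I(v)$ with $c_{i_1+1},\dots,c_{i_{t-2}+1}$ pairwise nonadjacent by Claim~\ref{claim:chord}. For these the same counting yields the ``bad case'' estimate
\[
\deg_G(c_{i_1+1})+\dots+\deg_G(c_{i_{t-2}+1})\ \le\ \tfrac{t-3}{2}n-\tfrac{3(t-3)}{2}\,s+2(t-2),
\]
while Lemma~\ref{lem:sig} gives the lower bound $\sigma_{t-2}(G)\ge\frac{t-3}{2}n$. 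The two are incompatible precisely when $s>\frac{4(t-2)}{3(t-3)}$, so the ``no good index'' hypothesis of Claim~\ref{claim:t-4} fails and the explicit $K_{1,t}$ construction goes through. Since $s\ge2$ and $\frac{4(t-2)}{3(t-3)}<2$ for every $t\ge6$, this disposes of all $t\ge6$; for $t=5$ it disposes of all configurations with $s\ge3$.

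The remaining case, which I expect to be the only genuinely delicate point, is $t=5$ with exactly $s=2$ vertices off $C$ and $n>6$. Here $s>\frac{4(t-2)}{3(t-3)}=2$ fails by a hair, and simultaneously the factor $t-5=0$ annihilates the slack coming from $|N_G(u)\cap N_G(v)|$, so the estimate above degenerates into an equality and yields no contradiction. My plan for this case is a dedicated rigidity analysis: assuming no induced $K_{1,5}$, the forced equalities give $\deg_G u+\deg_G v=\tfrac{2}{3}n$ and tightly constrain $N_G(u),N_G(v)$ and the chords around each common neighbor $c_l\in N_G(u)\cap N_G(v)$, enough to exhibit a fifth leaf $c_{j+1}$ (with $j\in I(v)$, $c_{j+1}\notin\{c_{l-1},c_{l+1}\}$) at some center $c_l$ and thereby contradict the assumption; equivalently, to show the forced structure can only occur at $n=2t-4=6$. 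Turning this into a clean argument, rather than a case check on the gap pattern of $N_G(v)$ along $C$, is the part I would spend the most effort on.
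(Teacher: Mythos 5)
Your overall strategy --- re-running the proof of Theorem~\ref{thm:main} under the non-strict hypothesis and tracking where equality is forced --- is the same as the paper's, and most of your bookkeeping is correct. The derivation of $n\ge 2t-4$ from $\frac{t-3}{t-2}n\le \deg_G u+\deg_G v\le m\le n-2$ is a valid, integrality-free alternative to the paper's argument (which instead notes that $n<2t-4$ would force $\sigma_2(G)\ge n-1$ and hence a Hamiltonian path). Your treatment of the boundary case $n=2t-4$ is a genuinely different route: the paper first extracts an induced $K_{1,t-1}$ (from Theorem~\ref{thm:main} applied with parameter $t-1$ when $t\ge 6$, and from Theorem~\ref{thm:k14} when $t=5$) and then observes that each of its $t-1$ pairwise nonadjacent leaves is forced to have degree exactly $t-3$ and hence to be joined to all of the remaining $t-3$ vertices; your direct rigidity analysis on the longest cycle ($N_G(u)=N_G(v)=O$ an alternating class of $C_m$, every vertex of $E=V(C)\setminus O$ forced by Claim~\ref{claim:chord} and the degree-sum condition to have neighborhood exactly $O$) reaches the same conclusion without invoking Mom\`ege's theorem, and it checks out.

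The genuine gap is in part (1): the case $t=5$, $n>6$, with exactly $s=2$ vertices off the longest cycle. You correctly isolate why it is delicate --- the bound $|I(v)|\ge 3\deg_G v-m$ can fall to $2=t-3$ when $\deg_G u=\deg_G v=n/3$, and even when $|I(v)|\ge 3$ the final degree-sum count collapses to $n\le\sum_j\deg_G(c_{i_j+1})\le n$ because the coefficient $t-5$ vanishes and $(t-2)(2-s)=0$ --- but you only sketch a plan (``a dedicated rigidity analysis'') without carrying it out, so the proposal does not yet prove statement (1) for $t=5$. To be fair, this is precisely where the paper's own one-line justification (``one can check that the exact arguments of Theorem~\ref{thm:main} hold'') is least convincing: with $\sigma_2(G)\ge\frac{2}{3}n$ in place of a strict inequality, Claim~\ref{claim:sizei} as written does not deliver $|I(v)|>t-3$. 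So your careful tracking has surfaced a real subtlety, but a complete proof must actually close this case, for instance by the equality analysis you describe and a demonstration that the forced configuration either yields an induced $K_{1,5}$ at some common neighbor of $u$ and $v$ or can only occur at $n=6$.
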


\begin{proof}
If $n<2t-4$, then $\sigma_2(G)\geq\frac{t-3}{t-2}n=\left(1-{1\over t-2}\right)n>\left(1-{1\over n/2}\right)n=n-2$.
Since $\sigma_2(G)$ is an integer, this implies $\sigma_2(G)\geq n-1$. 
This further implies $G$ has a Hamiltonian path, which is a contradiction.

If $n>2t-4$, then one can check that the exact arguments of Theorem~\ref{thm:main} hold to conclude that $G$ has an induced subgraph isomorphic to $K_{1, t}$.

Now assume $n=2t-4$.
Since $\sigma_2(G)\geq {t-3\over t-2}n>{(t-1)-3\over (t-1)-2}n$ and $\sigma_2(G)\geq {2\over 3}n$, by Theorem~\ref{thm:main} and by Theorem~\ref{thm:k14}, $G$ has an induced subgraph $S$ isomorphic to $K_{1,t-1}$.
Let $x_1,\ldots,x_{t-1}$ be the vertices of $S$ that have degree 1 in $S$. 
Since $x_1,\ldots,x_{t-1}$ are pairwise nonadjacent in $G$, we know each $x_i$ has at most $n-(t-1)=t-3$ neighbors.
Since $2t-6=\frac{t-3}{t-2}n = \sigma_2(G) \leq \deg_G(x_i)+\deg_G(x_j) \leq 2t-6$ for each distinct pair $x_i,x_j$, we obtain that $\deg_G(x_i) =t-3$ for $i\in\{1, \ldots, t-1\}$. 
Note that $V(S)-\{x_1, \ldots, x_{t-1}\}$ has exactly $t-3$ vertices. 

Therefore, $G$ is of the form $H\vee \overline{K}_{t-1}$ where $H$ is a graph on $t-3$ vertices. 
Moreover, regardless of $H$, it is not hard to see that $G$ satisfies $\sigma_2(G)={t-3\over t-2}n$ and has neither a Hamiltonian path nor an induced subgraph isomorphic to $K_{1, t}$.
\end{proof}

%%%%%%%%%%%%%%

%%%%%%%%%%%%%%% BIB
%\bibliographystyle{alpha}
%\bibliographystyle{plain}

%\bibliography{refs_ilkyoo}

\end{document}